\numberwithin{equation}{section}
\newtheorem{prop}[equation]{Proposition}
\newtheorem{cor}[equation]{Corollary}
\newtheorem{lem}[equation]{Lemma}
\theoremstyle{definition}
\newtheorem{exmp}[equation]{Example}
\newtheorem{rem}[equation]{Remark}
\theoremstyle{plain}
\renewcommand{\phi}{\varphi}
\renewcommand{\dim}{\operatorname{\mathsf{dim}}}
\renewcommand{\setminus}{\smallsetminus}
\newcommand\Br{\operatorname{\mathsf{Br}}}
\newcommand\Frob{\operatorname{\mathsf{Frob}}}
\newcommand\CH{\operatorname{\mathsf{CH}}}
\newcommand{\llangle}{\langle\!\langle}
\renewcommand{\leq}{\leqslant}
\renewcommand{\geq}{\geqslant}
\begin{document}
\title{Mixed multiquadratic splitting fields}
	
\date{31 March, 2025}

\author{Fatma Kader B\.{i}ng\"{o}l}
\author{Adam Chapman}
\author{Ahmed Laghribi}

\address{Scuola Normale Superiore, Piazza dei Cavalieri 7, 56126 Pisa, Italia}
\email{fatmakader.bingol@sns.it}

\address{School of Computer Science, Academic College of Tel-Aviv-Yaffo, Rabenu Yeruham St., P.O.B 8401, Yaffo, 6818211, Israel}
\email{adam1chapman@yahoo.com}

\address{Univ. Artois, UR 2462, Laboratoire de Math{\'e}matiques de Lens (LML), F-62300 Lens, France}
\email{ahmed.laghribi@univ-artois.fr}

\begin{abstract}
    We study mixed multiquadratic field extensions as splitting fields for central simple algebras of exponent $2$ in characteristic $2$.
    As an application, we provide examples of nonexcellent mixed biquadratic field extensions.
		
    \medskip\noindent
    {\sc Classification} (MSC 2020): 11E04, 11E81, 16K20, 13A35
		
    \medskip\noindent
    {\sc{Keywords:}} quaternion algebra, symbol length, excellent extension, characteristic $2$
\end{abstract}
	
\maketitle

\section{Introduction}
Throughout this short note, $F$ denotes a field of characteristic $2$. 
Let $\Br_2(F)$ denote the $2$-torsion subgroup of the Brauer group of $F$. 
It is well-known that $\Br_2(F)$ is generated by classes of $F$-quaternion algebras, see \cite[Chap. VII, \S 9]{Alb39}.
Recall that an $F$-quaternion algebra is a central simple $F$-algebra of degree $2$.  
For $a\in F$ and $b\in F^{\times}$, we denote by $[a,b)_F$ the $F$-quaternion algebra generated by two elements $i$ and $j$ satisfying the relations 
\[i^2+i=a,\ j^2=b\mbox{ and }ji=(1+i)j.\]
The quadratic extension $F(i)$ of $F$ contained in $[a,b)_F$ is a separable extension, while $F(j)$ is inseparable.
Any $F$-quaternion algebra is isomorphic to $[a,b)_F$ for certain $a\in F$ and $b\in F^{\times}$.

Given a class $[A]$ in $\Br_2(F)$ of a central simple $F$-algebra $A$ of exponent at most $2$, we may ask for the smallest $m\in\mathbb{N}$ such that $[A]$ is equal to the class of a tensor product of $m$ $F$-quaternion algebras. 
This numerical invariant $m$ of $A$ is denoted by $\lambda_2(A)$ and called the $2$-symbol length of $A$. 
In \Cref{section:descent-quat-alg}, we study the behavior of this invariant under multiquadratic extensions of separability degree at most $8$ (\Cref{symbol-length-mixed-multi}). 
To this end we start with some descent results, in particular we prove \Cref{descent-quaternions-ins-quad} which is based on a transfer of central simple $K$-algebras induced by the usual Frobenius homomorphism when $K$ is an inseparable quadratic extension of $F$.  
The main result of this paper, which is given in \Cref{section:non-excellent-mixed-biquad-ext}, is the application of our methods to the excellent property for quartic extensions.

Recall that a field extension $K/F$ is called \emph{excellent} if for any quadratic form $\varphi$ over $F$, the anisotropic part $(\varphi_K)_{an}$ of  the quadratic form $\varphi_K$ obtained by extending scalars to $K$ is defined over $F$, i.e. there exists a quadratic form $\psi$ over $F$ such that $(\varphi_K)_{an}\simeq\psi_K$.
It is well know that, in arbitrary characteristic, any quadratic extension is excellent \cite[Lemma 5.4]{HL06}.
The same holds for degree $3$ extensions, more generally for odd degree extensions, in view of Springer's Theorem (\cite[Corollary 18.5]{EKM08}).
The situation changes for quartic extensions.
Nonexcellent separable biquadratic extensions exist, see \cite[\S 5]{ELTW83}, \cite{Siv04} for characteristic not $2$ and \cite[\S 6]{LD24} for characteristic $2$. 
In \cite{LD24}, the excellence property for inseparable quartic extensions is studied. 
They provide examples of simple inseparable quartic extensions which are nonexcellent.
Purely inseparable biquadratic extensions are known to be excellent, see \cite{Hoff15}.
The remaining case for inseparable quartic extensions is then the case of mixed biquadratic extensions.
In \Cref{section:non-excellent-mixed-biquad-ext}, we provide examples of nonexcellent mixed biquadratic extensions (\Cref{example}), hence completing the picture for the study of excellence property for quartic extensions.
Our examples are realized as subfields of non-decomposable cyclic algebras of degree $8$ and exponent $2$ defined over a field of characteristic $2$. 
Our method applies also to simple purely inseparable quartic extensions contained in such an algebra, hence it gives new examples of nonexcellent simple purely inseparable quartic extensions as well.

Our main references are \cite{EKM08} for the theory of quadratic forms, and \cite{Alb39} for the theory of central simple algebras.

\section{Descent of quaternion algebras}\label{section:descent-quat-alg}
For $a_1,\ldots,a_n\in F^{\times}$, let $\langle a_1,\ldots,a_n\rangle_{\mathfrak{b}}$ denote the non-degenerate symmetric bilinear form 
$$F^n\times F^n\to F,\, ((x_1,\ldots,x_n),(y_1,\ldots,y_n))\mapsto\sum_{i=1}^{n}a_ix_iy_i.$$

We will be only considering nonsingular quadratic forms (nondegenerate even-dimension quadratic forms in the terminology of \cite{EKM08}), meaning quadratic forms having polar form with trivial radical. 
Recall from \cite[Corollary 7.32]{EKM08} that any such form $\phi$ over $F$ decomposes as follows: 
$$\phi\cong [a_1,b_1]\perp\ldots\perp[a_n,b_n]$$ for some $a_1,b_1,\ldots,a_n,b_n\in F$, where for any scalars $a, b\in F$ we denote by $[a,b]$ the $2$-dimensional quadratic form $aX^2+XY+bY^2$. In particular, a nonsingualar quadratic form is of even dimension. 
The Arf invariant of the form $\phi$ is defined to be $\sum_{i=1}^{n}a_ib_i+\wp(F)$ in $F/\wp(F)$, where $\wp(F)=\{a^2+a\mid a\in F\}$.

Following \cite{EKM08}, we use the notation $I_qF$ for the Witt group of nonsingular quadratic forms over $F$, $I F$ for the ideal of even-dimensional forms in the Witt ring $W F$ of non-degenerate symmetric bilinear forms over $F$.
The group $I_qF$ is endowed with a $WF$-module structure induced by tensor product in a natural way. 
For $n\geq 2$ an integer, let $I^n_qF$ be the product $I^{n-1}F\otimes I_qF$. 
The Hauptsatz of Arason-Pfister \cite[Theorem 23.7]{EKM08} asserts that if the Witt class of an anisotropic quadratic form $\phi$ lies in $I^n_qF$, then $\dim \phi \geq 2^n$.

For $n\geq 2$, the quadratic $n$-fold Pfister form is a nonsingular quadratic form isometric to
$$\langle1,a_1\rangle_{\mathfrak{b}}\otimes\ldots\otimes\langle1,a_{n-1}\rangle_{\mathfrak{b}}\otimes[1,a_n]$$
where $a_1,\ldots,a_{n-1}\in F^{\times}$ and $a_n\in F$. This form is denoted by $\llangle a_1,\ldots, a_{n-1},a_n]]$. 
A quadratic $1$-fold Pfister form is isometric to $[1, a]$ for some $a\in F$.

Let us make the following observation relevant to quadratic $2$-fold Pfister forms for later use.
\begin{lem}\label{norm-2fold-insep-quad}
    Let $x,y,a,b\in F$ with $x^{2}+by^{2}\neq0\neq b$. 
    There exists some $c\in F$ such that $\llangle x^{2}+by^{2},a]]\simeq\llangle b,c]]$.
\end{lem}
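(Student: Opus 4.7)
The plan is to realize $\llangle u,a]]$ with $u := x^{2}+by^{2}$ as the reduced norm form of the quaternion algebra $Q := [a,u)_{F}$ and then to re-present $Q$ in the form $[c,b)_{F}$ for a suitable $c\in F$; equating reduced norm forms then yields the desired isometry.

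Let $i,j\in Q$ be generators with $i^{2}+i = a$, $j^{2}=u$, $ji=(i+1)j$. Because $\cchar(F)=2$, the cross term in $(j+x)^{2}$ vanishes and
$$(j+x)^{2} \;=\; j^{2}+x^{2} \;=\; u+x^{2} \;=\; by^{2}.$$
Assume first that $y\neq 0$. Then $k := (j+x)/y\in Q$ satisfies $k^{2}=b$, so $F[k]\subseteq Q$ plays the role of an inseparable quadratic subalgebra. Motivated by requiring that conjugation by $k$ send the new separable generator to itself plus $1$, I would take
$$i' := \left(1+\tfrac{xj}{u}\right)i \;\in\; Q.$$
A direct expansion, using $i^{2}=a+i$, $ji=(i+1)j$, $j^{2}=u$, together with the usual characteristic $2$ cancellations, yields both $ki'=(i'+1)k$ and
$$(i')^{2}+i' \;=\; \frac{aby^{2}}{u} \;\in\; F.$$
Thus $k$ and $i'$ satisfy the defining relations of $[c,b)_{F}$ with $c := aby^{2}/u$; the resulting $F$-algebra homomorphism $[c,b)_{F}\to Q$ is a nonzero map between two central simple $F$-algebras of dimension $4$, hence an isomorphism. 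Comparing reduced norm forms gives $\llangle u,a]]\simeq\llangle b,aby^{2}/u]]$.

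The remaining case $y=0$ is handled separately. Then $u=x^{2}$ is a square in $F$, so $\llangle u,a]]$ is hyperbolic; taking $c=0$ makes $\llangle b,0]]$ hyperbolic as well, since its quadratic slot $[1,0]=X^{2}+XY$ is isotropic. Two hyperbolic $2$-fold quadratic Pfister forms of the same rank are isometric, so the conclusion holds with $c=0$.

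The main obstacle is the explicit computation of $(i')^{2}+i'$: many cross terms appear, and the cancellations that collapse the $j$- and $ij$-components to $0$ depend crucially on $\cchar(F)=2$ and on the three defining relations of $Q$. Once the ansatz for $i'$ is correctly guessed (forced by the requirement $ki'=(i'+1)k$, which pins down $i'$ modulo $F(k)$), verifying the two identities is routine.
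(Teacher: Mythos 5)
Your proof is correct, but it takes a genuinely different route from the paper's. The paper argues entirely at the level of quadratic forms: it writes $\llangle x^2+by^2,a]]=[1,a]\perp(x^2+by^2)[1,a]$, rescales the two binary summands and shifts the ``$a$-entries'' between them using the standard identities for forms $[r,s]$, and arrives directly at $[1,c]\perp b[1,c]=\llangle b,c]]$ with $c=a(1+x^2(x^2+by^2)^{-1})$. You instead pass through the quaternion algebra $[a,u)_F$ with $u=x^2+by^2$, exhibit the new inseparable generator $k=(j+x)/y$ with $k^2=b$ and the new separable generator $i'=(1+xj/u)i$, and deduce $[a,u)_F\cong[aby^2/u,b)_F$, whence the isometry of norm forms. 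I checked the two identities you assert without writing out: indeed $(j+x)i'=(i'+1)(j+x)$ (both sides equal $\tfrac{by^2}{u}(1+i)j$) and $(i')^2+i'=\tfrac{aby^2}{u}$ (the $j$- and $ij$-components cancel and the $i$-coefficient is $(by^2+x^2+u)/u=0$); moreover your $c=aby^2/u$ agrees with the paper's, since $a(1+x^2u^{-1})=a(u+x^2)u^{-1}=aby^2u^{-1}$. The degenerate cases are split differently ($x=0$ in the paper, $y=0$ for you), and both are handled correctly. What your approach buys is the algebra-level ``common slot'' statement $[a,x^2+by^2)_F\cong[c,b)_F$, which is conceptually pleasant and is really what the lemma is used for later; the cost is invoking the dictionary between quaternion algebras and their norm forms, whereas the paper's four-line chain of isometries is more elementary and self-contained.
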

\begin{proof}
    If $x=0$, then the statement holds with $c=a$. Assume now $x\neq0$. We have that
\begin{equation*}
\begin{split}		
    \llangle x^{2}+by^{2},a]]&=[1,a]\perp(x^{2}+by^{2})[1,a]\\
    &\simeq[x^{2},x^{-2}a]\perp[x^{2}+by^{2},(x^{2}+by^{2})^{-1}a]\\
    &\simeq[x^{2},x^{-2}a+(x^{2}+by^{2})^{-1}a]\perp[by^{2},(x^{2}+by^{2})^{-1}a]\\
    &\simeq[1,c]\perp b[1,d]
\end{split}	
\end{equation*}    
    where $c=a(1+x^{2}(x^{2}+by^{2})^{-1})$ and $d=aby^{2}(x^{2}+by^{2})^{-1}$. Note that $c+d=0$.
    It follows that $\llangle x^{2}+by^{2},a]]\simeq\llangle b,c]]$ with $c\in F$.
\end{proof}

For a finite field extension $K/F$ and an $F$-linear functional $s: K\to F$, 
the transfer of the quadratic form $\varphi$ (resp. a symmetric bilinear form $\mathfrak{b}$) over $K$ with respect to $s$ is denoted by $s_\ast(\varphi)$ (resp. $s_\ast(\mathfrak{b})$) \cite[\S 20.A]{EKM08}.
The functional $s: K\to F$ induces group homomorphisms $s:I^n_qK\to I^n_qF$ for any $n\geq1$; see \cite[Corollary 34.17]{EKM08}. 
We will need the following particular case of Frobenius reciprocity: For a non-degenerate symmetric bilinear form $\mathfrak{b}$ over $K$ and a nonsingular quadratic form $\phi$ over $F$, we have $s_\ast(\mathfrak{b}\otimes \phi)\cong s_\ast(\mathfrak{b})\otimes \phi$; see \cite[Proposition 20.2]{EKM08}. 

\begin{lem}\label{exact-r-s-I_q and I^2_q-quadext}
    Let $K/F$ be a quadratic field extension and $s:K\to F$ a nonzero $F$-linear functional such that $s(1)=0$. 
    Let $\varphi$ be an anisotropic even-dimensional quadratic form over $K$. Assume that $s_\ast(\varphi)$ is hyperbolic. 
    Then, there exists some nonsingular quadratic form $\psi$ over $F$ such that $\varphi\simeq \psi_K$.
    If furthermore $\varphi$ has trivial Arf invariant, then $\psi$ can be chosen to have trivial Arf invariant.
\end{lem}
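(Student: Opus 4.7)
The plan is to combine two ingredients: the exactness of the Scharlau transfer sequence for a quadratic extension in characteristic $2$, and the excellence of quadratic extensions. The former descends $\varphi$ up to Witt equivalence; the latter lifts Witt equivalence to isometry.

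For the first assertion, I would invoke the exact sequence
\[
W_qF \xrightarrow{i_*} W_qK \xrightarrow{s_*} W_qF,
\]
which is exact at $W_qK$ in characteristic $2$ whenever $s\colon K\to F$ is a nonzero $F$-linear functional with $s(1)=0$. The hypothesis that $s_*(\varphi)$ is hyperbolic places $[\varphi]$ in $\ker s_*$, so there exists a form $\tau$ over $F$ with $\tau_K\sim\varphi$ in $W_qK$. Anisotropy of $\varphi$ yields $\varphi\cong(\tau_K)_{an}$, and the excellence of the quadratic extension $K/F$ \cite[Lemma 5.4]{HL06} then provides a form $\psi$ over $F$ with $\psi_K\cong\varphi$, settling the first statement.

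For the Arf assertion, suppose $\varphi$ has trivial Arf invariant, so $[\varphi]\in I^2_qK$. I would invoke the analogous exact sequence at the level of $I^2_q$,
\[
I^2_qF \xrightarrow{i_*} I^2_qK \xrightarrow{s_*} I^2_qF,
\]
to select $\tau$ already in $I^2_qF$, that is, with trivial Arf invariant. Running the excellence argument produces $\psi$ with $\psi_K\cong\varphi$. To see that $\psi$ can be arranged to have trivial Arf, one compares $[\psi]$ and $[\tau]$ in $W_qF$: they have equal image under $i_*$, so their difference lies in $\ker(i_*\colon W_qF\to W_qK)$, and consequently the Arf class of $\psi$ lies in the kernel of the natural map $F/\wp(F)\to K/\wp(K)$. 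In the inseparable case $K=F(\sqrt b)$ this kernel is trivial, hence Arf$(\psi)=0$ automatically. In the separable case $K=F(\alpha)$ with $\alpha^{2}+\alpha=t$, the kernel equals $\{0,\,t+\wp(F)\}$, so Arf$(\psi)$ is either $0$ or $t$; if it equals $t$, one corrects by replacing $\psi$ with $\psi\perp\llangle t]]$, which has trivial Arf and scalar extension $\varphi\perp\mathbb{H}$ over $K$, and re-applies excellence to recover a form of the correct dimension.

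The principal obstacle is this final adjustment in the separable case: one must check that the second application of excellence does not reintroduce the Arf class $t$. This hinges on the structural observation that $\ker(i_*)\cap I^2_qF$ equals the subgroup $\llangle t]]\cdot IF$ of $W_qF$, all of whose elements have trivial Arf contribution (the parity of the coefficient form's dimension is even), so the corrective step is guaranteed to land in $I^2_qF$ and thus to yield a $\psi$ with $\psi_K\cong\varphi$ and trivial Arf invariant.
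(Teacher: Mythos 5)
Your treatment of the first assertion --- exactness of the transfer sequence at $I_qK$ combined with the excellence of quadratic extensions --- is a legitimate route and essentially unpacks what the paper cites as a black box ([BGBT18, Theorem~2.2]); the only caveat is that the exactness of the transfer sequence for an \emph{inseparable} quadratic extension in characteristic $2$ is not as classical as you suggest and deserves a precise reference.

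The Arf-invariant part, however, has a genuine gap in the separable case $K=F(\alpha)$ with $\wp(\alpha)=t$. (In the inseparable case your observation that $F/\wp(F)\to K/\wp(K)$ is injective does finish the job.) Your reduction to $\mathrm{Arf}(\psi)\in\{0,\,t+\wp(F)\}$ is correct, but the proposed correction does not close: replacing $\psi$ by $\psi\perp[1,t]$ kills the Arf invariant at the cost of making the extension to $K$ isotropic, and the second application of excellence returns \emph{some} $F$-form $\rho$ of the right dimension with $\rho_K\simeq\varphi$ --- but excellence gives no control over $\mathrm{Arf}(\rho)$, which again only lies in $\{0,\,t+\wp(F)\}$, so you are back where you started. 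Your structural claim that the even-dimensional part of the kernel of $I_qF\to I_qK$ is $[1,t]\otimes IF$ and has trivial Arf is true but circular here: the difference of $[\rho]$ and your trivial-Arf representative is only known to lie in $[1,t]\otimes WF$, and whether it lies in the even part is precisely the question of whether $\mathrm{Arf}(\rho)=0$. The paper avoids the loop by a local modification that you are missing: write $\psi\simeq\theta\perp[a,b]$ (with $a,b\in F^{\times}$ by anisotropy), let $d\in F$ represent $\mathrm{Arf}(\psi)$, and set $\psi'=\theta\perp[a,b+\frac{d}{a}]$. Then $\mathrm{Arf}(\psi')=0$, and since $d\in\wp(K)$ (because $\mathrm{Arf}(\varphi)=\mathrm{Arf}(\psi_K)$ is trivial) one has $[a,b+\frac{d}{a}]_K\simeq[a,b]_K$, whence $\psi'_K\simeq\psi_K\simeq\varphi$. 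This single step works uniformly in the separable and inseparable cases and requires neither the $I^2_q$-level exactness nor a second appeal to excellence.
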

\begin{proof}
    The first statement follows by \cite[Theorem 2.2]{BGBT18}.
    For the second statement, assume further that $\varphi$ has trivial Arf invariant. 
    By the first part, there exists a nonsingular quadratic form $\psi$ over $F$ such that $\varphi\simeq\psi_K$. 
	
    Let $d\in F$ be a representative of the Arf invariant of $\psi$. 
    Write $\psi\simeq\theta\perp[a,b]$ for a nonsingular quadratic form $\theta$ over $F$ and some $a,b\in F$.
    As $\varphi$ is anisotropic, we actually have $a,b\in F^{\times}$. 
    We set $\psi'=\theta\perp[a,b+\frac{d}{a}]$.
    Then $\psi'$ has trivial Arf invariant. 
    Note that $d\in\wp(K)$, hence by \cite[Example 7.23]{EKM08}, we have $$[a,b+\frac{d}{a}]_K\perp[a,b]_K\simeq[a,\frac{d}{a}]_K\perp[0,0]\simeq a[1,d]_K\perp[0,0]\simeq[0,0]\perp[0,0].$$ 
    Consequently, $[a,b+\frac{d}{a}]_K\simeq[a,b]_K$. 
    It follows that $\psi'_K\simeq \psi_K\simeq\varphi$.
\end{proof}

Let $K/F$ be a purely inseparable field extension such that $K^2\subseteq F$. 
The Frobenius homomorphism $\Frob_{K/F}: K\to F$, $x\mapsto x^2$ induces a group homomorphism
$$\Frob_{K/F}: \Br_2(K)\to\Br_2(F), [A]\mapsto[\Frob_{K/F}A].$$
For $x\in K$ and $y\in K^{\times}$, we have that $\Frob_{K/F}([x,y)_{K})=[x^2,y^2)_{F}$. See \cite[\S 3]{B24} for details.

Recall furthermore from \cite[Theorem 14.3]{EKM08} the group homomorphism 
$$e_2: I^2_qF\to\Br_2(F)$$
which is defined  by mapping the Witt class of a quadratic form in $I^2_qF$ to the Brauer class of its Clifford algebra. 
The kernel of $e_2$ is $I^3_qF$ (see \cite[\S 16]{EKM08}).

\begin{lem}\label{comm-diag-e2-s-Frob-quadext}
    Let $K/F$ be an inseparable quadratic field extension and let $s:K\to F$ be a nonzero $F$-linear functional such that $s(1)=0$. 
    Then, the following diagram commutes: 
\begin{center}
\begin{tikzcd}
    I^2_qK\arrow{r}{s_\ast}\arrow{d}{e_2}&I^2_qF\arrow{d}{e_2}\\
    \Br_2(K)\arrow{r}{\Frob_{K/F}}&\Br_2(F)
\end{tikzcd}
\end{center}	
\end{lem}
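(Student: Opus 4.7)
Both composites in the diagram are group homomorphisms, and $I^2_q K$ is generated as an abelian group by classes of $2$-fold quadratic Pfister forms. Hence it suffices to verify commutativity on a single generator $\pi = \llangle u, v]]_K$ with $u \in K^\times$ and $v \in K$. Fix an $F$-basis $\{1, t\}$ of $K$ with $t^2 = e \in F \setminus F^2$, and normalize $s$ so that $s(1) = 0$ and $s(t) = 1$. Write $u = u_0 + u_1 t$, $v = v_0 + v_1 t$ with $u_i, v_i \in F$; if $u \in F^\times$ then $u^2 \in F^{\times 2}$ and a short check shows both composites vanish, so assume $u_1 \neq 0$. Set $c_1 = u_0 v_1 + u_1 v_0$ (the $t$-coordinate of $uv$) and $\eta = u_1 c_1 e/u^2$.

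Computing through the Brauer group, $e_2(\pi) = [v, u)_K$ and $\Frob_{K/F}([v, u)_K) = [v^2, u^2)_F$. Since $u^2 = u_0^2 + eu_1^2$, applying \Cref{norm-2fold-insep-quad} with $x = u_0$, $y = u_1$, $b = e$, $a = v^2$ yields the isometry $\llangle u^2, v^2]]_F \simeq \llangle e, c]]_F$ with $c = v^2 eu_1^2/u^2 \in F$, so $\Frob_{K/F}(e_2(\pi)) = [c, e)_F$.

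Computing through the Witt group, decompose $\pi \simeq [1, v]_K \perp [u, v/u]_K$. A direct coordinate computation using the basis $\{1, t\}$ shows that for $a = a_0 + a_1 t$ and $b = b_0 + b_1 t$ in $K$,
\[
s_\ast([a, b]_K) \simeq [a_1, b_1 e] \perp [a_1 e, b_1].
\]
Thus $s_\ast([1, v]_K)$ is hyperbolic, and using $v/u = vu/u^2$ (whose $t$-coordinate is $c_1/u^2$) we obtain $s_\ast([u, v/u]_K) \simeq [u_1, \eta/u_1] \perp [u_1 e, \eta/(u_1 e)]$. Both summands share the Arf invariant $\eta$, so the identity $[a, b] = \llangle a, ab]]_F - [1, ab]_F$ in $W_q F$, together with the cancellation $2[1, \eta]_F = 0$, yields $s_\ast(\pi) \equiv \llangle u_1, \eta]]_F + \llangle u_1 e, \eta]]_F$ in $W_q F$. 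Applying $e_2$ and using bilinearity of the quaternion symbol in its second slot gives $e_2(s_\ast(\pi)) = [\eta, u_1^2 e)_F = [\eta, e)_F$, since $u_1^2$ is a square.

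The final step is to verify $[\eta, e)_F = [c, e)_F$ in $\Br_2(F)$, equivalently $[c + \eta, e)_F = 0$ with $c + \eta = (u_1 e/u^2)(u_1 v^2 + c_1)$. This is the main obstacle of the proof: it amounts to exhibiting $e$ as a norm from the separable quadratic extension $F(i')/F$ with $i'^2 + i' = c + \eta$. I would expect to produce such a norm witness by an explicit $F$-combination of $u_0, u_1, v_0, v_1$, essentially by reverse-engineering the construction in the proof of \Cref{norm-2fold-insep-quad}.
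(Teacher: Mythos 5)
Your computations along both sides of the diagram are correct as far as they go: the reduction to a Pfister generator $\pi=\llangle u,v]]_K$, the transfer formula $s_\ast([a,b]_K)\simeq[a_1,b_1e]\perp[a_1e,b_1]$, and the identifications $\Frob_{K/F}(e_2(\pi))=[c,e)_F$ and $e_2(s_\ast(\pi))=[\eta,e)_F$ all check out. But the argument is not complete: the equality $[c+\eta,e)_F=0$ is, in your normalization, exactly the content of the lemma, and you leave it as something you ``would expect'' to be able to verify. That is a genuine gap. It is, however, fillable. Writing $N=u^2=u_0^2+eu_1^2$ and substituting $eu_1^2=N+u_0^2$ gives
\[
c+\eta\;=\;\frac{e}{N}\bigl(c_1^2+Nv_1^2+u_1c_1\bigr)\;=\;e\Bigl(v_1+\frac{c_1u_0}{N}\Bigr)^2+\wp\Bigl(\frac{ec_1u_1}{N}\Bigr),
\]
so that $[c+\eta,e)_F\simeq[e\lambda^2,e)_F=0$ with $\lambda=v_1+c_1u_0/N$, using the standard relations $[a+\wp(w),b)\simeq[a,b)$ and $[a+\mu^2b,b)\simeq[a,b)$. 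With this inserted, your proof is correct.

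It is worth comparing with the paper's route, which makes the entire difficulty evaporate. Since $[1,v]\simeq[1,v^2]$ over $K$ and $v^2\in F$, one may assume from the outset that the $[1,\cdot]$-slot of every generator of $I^2_qK$ lies in $F$, i.e.\ $I^2_qK=IK\otimes I_qF$. Frobenius reciprocity then gives $s_\ast(\llangle u,v^2]])\cong s_\ast(\langle 1,u\rangle_{\mathfrak b})\otimes[1,v^2]$, and the transfer of the bilinear form (\cite[Lemma 34.19]{EKM08}, or your own Gram-matrix computation) is $\langle u_1,u_1u^2\rangle_{\mathfrak b}$ up to a metabolic summand, hence Witt equivalent to $u_1\llangle u^2\rrangle_{\mathfrak b}$. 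Applying $e_2$ yields $[v^2,u^2)_F=\Frob_{K/F}([v,u)_K)$ directly, with no appeal to \Cref{norm-2fold-insep-quad} and no residual norm identity to check. Keeping $v$ in the general form $v_0+v_1t$ is precisely what manufactures your final obstruction; squaring $v$ first removes it.
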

\begin{proof}
    Using the isometry $[1,x]\simeq[1,x^2]$ for any $x\in K$ and the fact that $K^2\subseteq F$, it is clear that $I^2_qK=IK \otimes I_qF$. 
    Commutativity of the diagram follows by Frobenius reciprocity, the computation of transfers of $1$-fold Pfister bilinear forms in \cite[Lemma 34.19]{EKM08}, and the computation of the Frobenius of quaternion algebras in \cite[Proposition 3.2]{B24}.
\end{proof}

For central simple $F$-algebras $A$ and $B$, we write $A\sim B$ to indicate that $A$ and $B$ are Brauer equivalent.
\begin{prop}\label{descent-quaternions-ins-quad}
    Let $K/F$ be an inseparable quadratic field extension. Let $A$ be a central simple $K$-algebra of exponent $2$ and assume that $m\in\mathbb{N}_+$ is such that $A$ is Brauer equivalent to a tensor product of $m$ $K$-quaternion algebras. 
    Assume that $\Frob_{K/F}A=0$. 
    There exist $F$-quaternion algebras $H_1,\ldots,H_n$ where $n\leq2m-1$ such that $A\sim\bigotimes_{i=1}^{n}H_i\otimes_FK$.
\end{prop}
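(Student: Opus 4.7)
The approach is to translate the Brauer-theoretic hypothesis into the language of quadratic forms, apply the commutative diagram of Lemma~\ref{comm-diag-e2-s-Frob-quadext}, and then descend via Lemma~\ref{exact-r-s-I_q and I^2_q-quadext}. I fix a decomposition $A \sim Q_1 \otimes_K \cdots \otimes_K Q_m$ with each $Q_i = [a_i, b_i)_K$ a $K$-quaternion algebra, and let $\pi_i := \llangle b_i, a_i]]$ be its associated $2$-fold Pfister form, so that $e_2([\pi_i]) = [Q_i]$. Setting $\varphi := \pi_1 \perp \cdots \perp \pi_m$, one has $[\varphi] \in I^2_qK$, $\dim \varphi = 4m$, and $e_2([\varphi]) = [A]$.

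Next, I choose a nonzero $F$-linear functional $s \colon K \to F$ with $s(1) = 0$, which exists because $[K:F] = 2$. By Lemma~\ref{comm-diag-e2-s-Frob-quadext} together with the hypothesis $\Frob_{K/F}[A] = 0$, we obtain $e_2([s_* \varphi]) = \Frob_{K/F}[A] = 0$, so $[s_*\varphi] \in I^3_qF$. This is the key vanishing that will make descent possible.

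To invoke Lemma~\ref{exact-r-s-I_q and I^2_q-quadext} I must modify $\varphi$ by an $F$-defined form so that its $s_*$-transfer becomes genuinely hyperbolic, not merely Witt-trivial in $I^3_qF$. The strategy is to represent the Witt class of $s_*\varphi$ as a sum of $3$-fold Pfister forms over $F$; each such Pfister, by Lemma~\ref{norm-2fold-insep-quad} applied to its first slot, can be rewritten as $\langle 1, d \rangle_{\mathfrak{b}} \otimes \llangle d_j, e_j]]$ with $d$ a generator of $K/F$, so that its restriction to $K$ is hyperbolic modulo an $F$-defined $2$-fold Pfister $\tau_j := \llangle d_j, e_j]]$. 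Let $\psi := \tau_1 \perp \cdots \perp \tau_k$, where $k$ is the number of $3$-fold Pfister summands. By construction $s_*(\varphi - \psi_K)$ becomes hyperbolic, so Lemma~\ref{exact-r-s-I_q and I^2_q-quadext} (together with its Arf-invariant refinement) yields an $F$-form $\theta$ with $\varphi - \psi_K \simeq \theta_K$. Combining the two, $[A]$ is the restriction to $K$ of a Brauer class over $F$ realized as the tensor product of the $F$-quaternions associated to the $\tau_j$'s and to the $2$-fold Pfister summands of $\theta$.

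The main obstacle is the sharp bound $n \leq 2m - 1$. A naive dimension count yields $n \leq 2m$: one has $k \leq m$ Pfisters in $\psi$ (from $\dim s_*\varphi \leq 8m$ and the Arason--Pfister Hauptsatz), and up to $m$ more in $\theta$. Shaving off the final quaternion requires exploiting the trivial-Arf-invariant refinement in Lemma~\ref{exact-r-s-I_q and I^2_q-quadext} together with the fact that the Clifford invariant of $\theta$ must cancel that of $\psi_K$ in $\Br_2(K)$, forcing a redundancy among the extracted Pfisters. I expect this tight counting, and the precise normalization of the $3$-fold Pfister summands of $s_*\varphi$ via Lemma~\ref{norm-2fold-insep-quad}, to be the main technical hurdle.
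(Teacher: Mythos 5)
Your setup (the forms $\pi_i$ with $e_2(\varphi)=[A]$, the functional $s$ with $s(1)=0$, and the conclusion $[s_*\varphi]\in I^3_qF$ via Lemma~\ref{comm-diag-e2-s-Frob-quadext}) matches the paper, but the central step of your plan fails. You propose to ``modify $\varphi$ by an $F$-defined form so that its $s_*$-transfer becomes genuinely hyperbolic.'' This cannot work: for any nonsingular quadratic form $\psi$ over $F$, Frobenius reciprocity gives $s_*(\psi_K)\cong s_*(\langle 1\rangle_{\mathfrak b})\otimes\psi$, and since $s(1)=0$ and $s(b)=bs(1)=0$, the bilinear form $s_*(\langle 1\rangle_{\mathfrak b})$ has Gram matrix $\bigl(\begin{smallmatrix}0&1\\1&0\end{smallmatrix}\bigr)$, which is metabolic, so $s_*(\psi_K)$ is hyperbolic. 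In other words, the composite $I_qF\to I_qK\xrightarrow{s_*}I_qF$ is zero, so replacing $\varphi$ by $\varphi\perp\psi_K$ (or removing $\psi_K$) leaves the Witt class of the transfer unchanged; your claim that ``by construction $s_*(\varphi-\psi_K)$ becomes hyperbolic'' is exactly the conclusion you need and is not achieved by this device. (There is also a secondary unjustified step: a class in $I^3_qF$ need not be a sum of $3$-fold Pfister forms whose first slot is a norm $x^2+by^2$ from $K$, which is what Lemma~\ref{norm-2fold-insep-quad} requires.) Finally, you concede that your count gives only $n\leq 2m$ and that the improvement to $2m-1$ is an unproved ``expected'' redundancy, so the stated bound is not established.

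The missing idea is to build the hyperbolicity of $s_*\varphi$ into the choice of $\varphi$ itself rather than trying to repair it afterwards. The paper first arranges $a_i\in F$ (using $[a_i,z_i)_K\simeq[a_i^2,z_i)_K$ and $K^2\subseteq F$) and then takes $\varphi=\perp_{i=1}^m\lambda_i\llangle z_i,a_i]]$ with explicit scalars $\lambda_i$ depending on $z_i=x_i+y_i\sqrt b$; this does not change $e_2(\varphi)=[A]$, but an explicit computation of $s_*(\langle z_i\rangle_{\mathfrak b})$ together with Lemma~\ref{norm-2fold-insep-quad} shows that $s_*\varphi$ is then Witt equivalent to the \emph{single} $2$-fold Pfister form $\llangle b,\sum c_i]]$. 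Since its Witt class also lies in $I^3_qF$, the Arason--Pfister Hauptsatz forces $s_*\varphi$ to be hyperbolic outright, so Lemma~\ref{exact-r-s-I_q and I^2_q-quadext} descends $\varphi$ directly to an $F$-form $\psi$ of dimension exactly $4m$ with trivial Arf invariant, whose Clifford invariant is a sum of $2m-1$ quaternion classes. That is where the sharp bound comes from.
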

\begin{proof}
    Let $Q_1,\ldots, Q_m$ be $K$-quaternion algebras with $A\sim\bigotimes_{i=1}^{m}Q_i$.
    Let $a_i\in K$, $z_i\in K^{\times}$ be such that $Q_i\simeq[a_i,z_i)_K$ for $1\leq i\leq m$.
    Since $[a_i,z_i)_K\simeq[a_i^{2},z_i)_K$, we may assume that $a_i\in F$ for $1\leq i\leq m$.
    Fix $b\in F^{\times}\setminus F^{\times2}$ with $K=F(\sqrt{b})$. 
    Let $s:K\to F$ be the $F$-linear functional with $s(1)=0$ and $s(\sqrt{b})=1$. For $1\leq i\leq m$, write $z_i=x_{i}+y_{i}\sqrt{b}$ with $x_{i},y_{i}\in F$.
    Let $\lambda_i=y_{i}^{-1}$ if $y_{i}\neq0$, otherwise let $\lambda_i=1$.
    Finally we set $\varphi=\perp_{i=1}^{m}\lambda_i\llangle z_i,a_i]]$. Note that we have $e_2(\varphi)=[A]$.
    If $y_i=0$, then $s_\ast(\langle z_i\rangle)$ is clearly hyperbolic. If $y_i\neq0$, an easy computation shows that $s_\ast(\langle z_i\rangle)$ represents $y_i$ and that its determinant is $x_i^2+by_i^2$, hence $s_\ast(\langle z_i\rangle)\simeq\langle y_i,y_i(x_i^2+by_i^2)\rangle$.
    Now using Frobenius reciprocity, one computes that the Witt class of $s_\ast(\varphi)$ is given by $\perp_{i=1}^{m}\llangle x_{i}^{2}+y_{i}^{2}b,a_i]]$. 
    \Cref{norm-2fold-insep-quad} provide $c_1,\ldots,c_m\in F$ such that 
    $$\perp_{i=1}^{m}\llangle x_{i}^{2}+y_{i}^{2}b,a_i]]\simeq\perp_{i=1}^{m}\llangle b,c_i]]$$
    which is Witt equivalent to $\llangle b,\sum_{i=1}^{m}c_i]]$.
    Furthermore by \Cref{comm-diag-e2-s-Frob-quadext}, we have $$e_2(s_\ast(\varphi))=\Frob_{K/F}(e_2(\varphi))=\Frob_{K/F}A$$ which is trivial by our assumption. 
    Hence, the Witt class of $s_\ast(\varphi)$ lies in $I_q^{3}F$.
    Since $s_\ast(\varphi)$ is Witt equivalent to $\llangle b,\sum_{i=1}^{m}c_i]]$, this implies that $s_\ast(\varphi)$ is hyperbolic, by the Hauptsatz of Arason-Pfister.
    It follows by \Cref{exact-r-s-I_q and I^2_q-quadext} that $\varphi\simeq \psi_K$ for some nonsingular quadratic form $\psi$ over $F$ with trivial Arf invariant. 
    In particular, we have $\dim\psi=\dim\varphi=4m$.
    Then $e_2(\psi)=\sum_{i=1}^{2m-1}[H_i]$ for some $F$-quaternion algebras $H_1,\ldots,H_{2m-1}$ (see the proof of \cite[Lemma 38.1]{EKM08}).
    It follows that $$[A]=e_2(\varphi)=e_2(\psi_K)=\sum_{i=1}^{2m-1}[H_i\otimes_FK].$$
\end{proof}

As a corollary, we retrieve the case $p=2$ from \cite[Proposition 4.6]{B24}. 
\begin{cor}\label{compar-2-symbol-ins-quad}
    Let $K/F$ be an inseparable quadratic field extension. 
    Let $A$ be a central simple $F$-algebra of exponent $2$ and assume that $A_K$ is not split.
    Then $A\sim B\otimes_FH$ where $H$ is an $F$-quaternion algebra that splits over $K$ and $B$ is a central simple $F$-algebra with $\lambda_2(B)\leq2\lambda_2(A_K)-1$. 
    In particular $\lambda_2(A)\leq2\lambda_2(A_K)$.
\end{cor}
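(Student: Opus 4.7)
The plan is to apply Proposition~\ref{descent-quaternions-ins-quad} to $A_K$ and then collapse the residual Brauer class to a single quaternion. Set $m := \lambda_2(A_K)$; since $A_K$ is assumed not split, $m \geq 1$.

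First I would verify the Frobenius hypothesis $\Frob_{K/F}(A_K) = 0$ required by Proposition~\ref{descent-quaternions-ins-quad}. As $A$ has exponent $2$, the class $[A] \in \Br_2(F)$ is a finite sum of classes of $F$-quaternion algebras $[a_i, c_i)_F$. Extending scalars to $K$ and applying $\Frob_{K/F}$ via the formula $\Frob_{K/F}([a,c)_K) = [a^2,c^2)_F$ produces $\sum_i [a_i^2, c_i^2)_F$; each summand is trivial because $j - c_i$ is a nonzero nilpotent in $[a_i^2, c_i^2)_F$, so that quaternion algebra equals $M_2(F)$.

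Proposition~\ref{descent-quaternions-ins-quad} then provides $F$-quaternion algebras $H_1, \ldots, H_n$ with $n \leq 2m-1$ such that $A_K \sim B \otimes_F K$, where $B := H_1 \otimes_F \cdots \otimes_F H_n$ satisfies $\lambda_2(B) \leq n \leq 2\lambda_2(A_K) - 1$. The algebra $C := A \otimes_F B^{\op}$ has exponent dividing $2$ and becomes split over $K$, hence its index divides $[K:F] = 2$. Consequently $C$ is Brauer equivalent to an $F$-quaternion algebra $H$ (possibly the split quaternion $M_2(F)$), and $H$ must itself be split by $K$ since $C$ is. This yields $A \sim B \otimes_F H$ with $H$ split by $K$, and the in-particular inequality follows from $\lambda_2(A) \leq \lambda_2(B) + \lambda_2(H) \leq (2\lambda_2(A_K)-1) + 1$.

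The only step carrying any real computational content is the verification that $\Frob_{K/F}(A_K) = 0$. Once that is settled, Proposition~\ref{descent-quaternions-ins-quad} does the substantive work, and the reduction of a central simple algebra split by a quadratic extension to a single quaternion is the classical consequence of the index dividing the degree of a splitting field.
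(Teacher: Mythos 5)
Your proposal is correct and follows essentially the same route as the paper: apply Proposition~\ref{descent-quaternions-ins-quad} to $A_K$ to descend $2\lambda_2(A_K)-1$ quaternion factors, then absorb the remaining class, which is split by $K$ and hence of index at most $2$, into a single quaternion algebra $H$. The only cosmetic difference is that the paper cites \cite[Proposition 3.2]{B24} for the vanishing $\Frob_{K/F}A_K=0$, whereas you verify it directly by writing $[A]$ as a sum of quaternion classes and noting that $[a^2,c^2)_F$ is split; both are fine.
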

\begin{proof}
    Set $m=\lambda_2(A_K)$. We have by \cite[Proposition 3.2]{B24} that $\Frob_{K/F}A_K=0$. 
    Hence, \Cref{descent-quaternions-ins-quad} provide $F$-quaternion algebras $H_1,\ldots, H_{2m-1}$ such that $A_K\sim\bigotimes_{i=1}^{2m-1}H_i\otimes_FK$.
    In particular, $A\otimes_F\bigotimes_{i=1}^{2m-1}H_i$ is split over $K$, hence is Brauer equivalent to an $F$-quaternion algebra $H_{2m}$ that splits over $K$. 
    It follows that $A\sim\bigotimes_{i=1}^{2m}H_i$.
\end{proof}

\begin{prop}\label{symbol-length-mixed-multi}
    Let $A$ be a central simple $F$-simple algebra of exponent $2$.
    Assume that $A$ splits over $F(\alpha_1,\ldots,\alpha_m,\sqrt{b_1},\ldots,\sqrt{b_n})$ where $\alpha_i^{2}-\alpha_i\in F$ for $1\leq i\leq m$ and $b_1,\ldots,b_n\in F^{\times}\setminus F^{\times2}$. 
    Then $A\sim B\otimes_F\bigotimes_{i=1}^{n}[a_i,b_i)_F$ where $a_1,\ldots,a_n\in F$ and $B$ is a central simple $F$-algebra such that the following hold:
\begin{enumerate}[$(1)$]
    \item If $m=1$, then $\lambda_2(B)\leq1$, in particular $\lambda_2(A)\leq n+1$.
    \item If $m=2$, then $\lambda_2(B)\leq2^{n}+1$, in particular $\lambda_2(A)\leq2^{n}+n+1$.
    \item If $m=3$, then $\lambda_2(B)\leq3\cdot2^{n}+1$, in particular $\lambda_2(A)\leq3\cdot2^{n}+n+1$.
\end{enumerate} 
\end{prop}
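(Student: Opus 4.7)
The plan is to induct on $n$, using \Cref{compar-2-symbol-ins-quad} to peel off the inseparable extension $F(\sqrt{b_n})$ at each step, leaving the base case $n = 0$ to be handled by separable multiquadratic descent. For the base case $n = 0$, $A$ is split by $F(\alpha_1, \ldots, \alpha_m)$. When $m = 1$, an exponent-$2$ algebra split by a separable quadratic extension is Brauer-equivalent to a quaternion algebra, so $\lambda_2(A) \le 1$; for $m = 2$ and $m = 3$, the bounds $\lambda_2(A) \le 2$ and $\lambda_2(A) \le 4$ are the characteristic-$2$ analogues of the classical Albert-type and Sivatski-type results for separable biquadratic, resp.\ triquadratic, splitting, available from the literature or established by a separable counterpart of the descent in \Cref{compar-2-symbol-ins-quad}.

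For the inductive step, set $K = F(\sqrt{b_n})$. Then $A_K$ is a central simple $K$-algebra of exponent at most $2$, split by the mixed multiquadratic extension $K(\alpha_1, \ldots, \alpha_m, \sqrt{b_1}, \ldots, \sqrt{b_{n-1}})$, which has $m$ separable and at most $n - 1$ inseparable generators (a $b_i$ that happens to become a square in $K$ just contributes a trivial factor). The induction hypothesis applied to $A_K$ over $K$ yields $a_1', \ldots, a_{n-1}' \in K$ and a central simple $K$-algebra $C$ satisfying the claimed bound at $n - 1$, with
\[
A_K \sim C \otimes_K \bigotimes_{i=1}^{n-1} [a_i', b_i)_K.
\]
Writing $a_i' = u_i + \sqrt{b_n}\, v_i$ with $u_i, v_i \in F$, the identity $[a_i', b_i)_K \simeq [(a_i')^2, b_i)_K$ (already used in the proof of \Cref{descent-quaternions-ins-quad}) lets us replace $a_i'$ by $a_i := u_i^2 + b_n v_i^2 \in F$. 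Setting $A' := A \otimes_F \bigotimes_{i=1}^{n-1} [a_i, b_i)_F$, we then have $A'_K \sim C$, and hence $\lambda_2(A'_K) = \lambda_2(C)$.

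Now apply \Cref{compar-2-symbol-ins-quad} to $A'$ and $K/F$: we obtain $A' \sim B \otimes_F H$ with $\lambda_2(B) \le 2\lambda_2(A'_K) - 1$ and $H$ an $F$-quaternion algebra split by $K$; any such $H$ is of the form $[a_n, b_n)_F$ for some $a_n \in F$ (in the non-split case $K$ embeds in $H$, and the split case corresponds to $a_n = 0$). Since exponent-$2$ quaternion algebras are self-opposite in the Brauer group, tensoring back by $\bigotimes_{i=1}^{n-1} [a_i, b_i)_F$ yields
\[
A \sim B \otimes_F \bigotimes_{i=1}^{n} [a_i, b_i)_F,
\]
and the arithmetic $2 \cdot (\text{bound at } n-1) - 1$ evaluates to precisely $1$, $2^n + 1$, and $3 \cdot 2^n + 1$ in the three cases. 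The main obstacle is descending the $K$-parameters $a_i'$ to $F$; this is resolved by the quaternion isomorphism $[a, b)_K \simeq [a^2, b)_K$ together with $a^2 \in F$ for any $a \in K = F(\sqrt{b_n})$. A minor additional care is needed when \Cref{compar-2-symbol-ins-quad} is applied to an $A'$ that already splits over $K$, but this case is trivial since then $A' \sim [a_n, b_n)_F$ with $B$ split.
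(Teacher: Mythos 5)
Your proposal is correct and follows essentially the same route as the paper: induction on $n$, with the base case $n=0$ settled by the classical results on separable multiquadratic splitting (Albert for $m=2$, Rowen for $m=3$), and the inductive step peeling off $K=F(\sqrt{b_n})$ via \Cref{compar-2-symbol-ins-quad} after descending the parameters $a_i'$ to $F$ using $[a,b)_K\simeq[a^2,b)_K$. Your extra care about the cases where $A'_K$ is already split or some $b_i$ becomes a square in $K$ is a small refinement the paper leaves implicit, but the argument is the same.
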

\begin{proof}
    We prove the statement by induction on $n$. 
    In the case where $n=0$, $(1)$ is clear, and $(2)$ resp. $(3)$ follows by \cite{Alb32} resp. \cite{Row84}.
    We explain the proof of the statement only for the case where $m=3$; the argument is the same for each cases.

    Assume now $m=3$, $n\geq1$ and set $K=F(\sqrt{b_n})$.
    We have by induction hypothesis that $A_K\sim B'\otimes_K\bigotimes_{i=1}^{n-1}[a_i,b_i)_K$ for some $a_1,\ldots,a_{n-1}\in K$ and a central simple $K$-algebra $B'$ with $\lambda_2(B')\leq3\cdot2^{n-1}+1$. 
    Since $[a_i,b_i)_K\simeq[a_i^{2},b_i)_K$, we  may assume that $a_1,\ldots,a_{n-1}\in F$. 
    Hence $(A\otimes_{F}\bigotimes_{i=1}^{n-1}[a_i,b_i)_F)_K\sim B'$.
    It follows now by \Cref{compar-2-symbol-ins-quad} that $$A\otimes_{F}\bigotimes_{i=1}^{n-1}[a_i,b_i)_F\sim B\otimes_F[a_n,b_n)_F$$ for $a_n\in F$ and a central simple $F$-algebra $B$ with $\lambda_2(B)\leq2(3\cdot2^{n-1}+1)-1=3\cdot2^{n}+1$. 
    Therefore $A\sim B\otimes_F\bigotimes_{i=1}^{n}[a_i,b_i)_F$ and we have $\lambda_2(A)\leq3\cdot2^{n}+1+n$. 
\end{proof}

\begin{rem}
    Let $A$ be a central simple $F$-algebra of exponent $2$.
    Assume that $A$ splits over $F(\alpha_1,\ldots,\alpha_m,\sqrt{b_1},\ldots,\sqrt{b_n})$ where $\alpha_i^{2}-\alpha_i\in F$ for $1\leq i\leq m$ and $b_1,\ldots,b_n\in F^{\times}\setminus F^{\times2}$. 
    The bound in \Cref{symbol-length-mixed-multi} for $m=1$ coincides with the existing one; see \cite[Théorème 4]{MM95} also \cite[Proposition 4.2]{B24}. 
    For $m=2$, it is shown in \cite[Théorème 5]{MM95} that $\lambda_2(A)\leq n+3$, which can also be obtained from methods in \cite{B24}. 
    Finally for $m=3$, the methods in \cite{B24} applied to this case would give that $\lambda_2(A)\leq n+7$.
    These bounds are better than what we get in $(2)$ and $(3)$ of \Cref{symbol-length-mixed-multi} for $n\geq2$.
    They coincide with our bounds from \Cref{symbol-length-mixed-multi} for $n=1$.
\end{rem}

\section{Excellence property for biquadratic extensions}\label{section:non-excellent-mixed-biquad-ext}
In this section, we provide examples of nonexcellent mixed biquadratic extensions, completing the study of the excellence property of inseparable quartic extensions done in \cite{LD24}. 
The same argument also applies to establish the nonexcellence of simple purely inseparable quartic extensions.

\begin{prop}\label{mixed-biqud-non-decom-cyclic} 
    Let $A$ be a central simple $F$-algebra of degree $8$ and exponent $2$. 
    If $A$ contains a mixed biquadratic or simple purely inseparable quartic field extension of $F$ which is excellent, then $A$ decomposes into a tensor product of $3$ quaternion algebras over $F$. 
\end{prop}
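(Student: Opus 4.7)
Let $L$ denote the quartic subfield of $A$. Since $[L:F]=4$ and $\deg A=8$, the centralizer $C_A(L)$ is a central simple $L$-algebra of degree $2$, hence an $L$-quaternion algebra $Q$, and $[A_L]=[Q]$ in $\Br_2(L)$. Write $Q\simeq[a,z)_L$ with $a\in L$ and $z\in L^{\times}$. Since $\deg A=2^3$, the desired decomposition $A\cong H_1\otimes_F H_2\otimes_F H_3$ will follow from $\lambda_2(A)\leq 3$ by a degree count. The strategy is to exhibit a splitting field of $A$ covered by \Cref{symbol-length-mixed-multi}: in the mixed biquadratic case $L=F(\alpha,\sqrt{b})$, the field $L(\sqrt{z})$ splits $A$, and if $z$ can be chosen in $F^{\times}$, then $L(\sqrt{z})=F(\alpha,\sqrt{b},\sqrt{z})$ falls under \Cref{symbol-length-mixed-multi}(1) with $m=1$ and $n=2$, giving $\lambda_2(A)\leq 3$ directly.

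In the simple purely inseparable case $L=F(x)$ with $c:=x^4\in F$, the unique quadratic subfield is $K=F(x^2)=F(\sqrt{c})$ and $L=K(x)$ is inseparable quadratic over $K$. Here \Cref{symbol-length-mixed-multi} does not apply verbatim to $L(\sqrt{z})$, but one combines iterated application of \Cref{compar-2-symbol-ins-quad} along the tower $F\subset K\subset L$ (which gives a priori only $\lambda_2(A)\leq 4$) with the excellence-based sharpening below to again conclude $\lambda_2(A)\leq 3$. In both cases the heart of the matter is the passage from $z\in L^{\times}$ to $z\in F^{\times}$, which is where excellence of $L/F$ is used.

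To produce such a representation of $[Q]$, pick an anisotropic $F$-form $\phi\in I_q^2 F$ of minimal dimension with $e_2(\phi)=[A]$. Over $L$, both $\phi_L$ and the $2$-fold Pfister form $\llangle z,a]]$ have Clifford invariant $[A_L]=[Q]$, so $\phi_L\perp\llangle z,a]]$ lies in $I_q^3 L$. By the excellence hypothesis, the anisotropic part $(\phi_L)_{an}$ is defined over $F$, say $\eta_L\simeq(\phi_L)_{an}$ for some nonsingular $F$-form $\eta$; then $\phi$ and $\eta$ differ by a class in $\ker(WF\to WL)\cap I_q^2 F$. Using the Hauptsatz of Arason--Pfister to control dimensions, together with the explicit description of $\ker(\Br_2(F)\to\Br_2(L))$ — in the mixed biquadratic case generated by the quaternion classes $[\alpha^2-\alpha,y)_F$ for $y\in F^{\times}$ and $[x_0,b)_F$ for $x_0\in F$ (and analogously in the purely inseparable case by classes $[x_0,c)_F$ and $[x_0',x^2)_F$) — one extracts from $\eta$ two $F$-quaternion classes in the kernel whose combined effect is to bring $[Q]$ into the form $[a',b')_L$ with $b'\in F^{\times}$. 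Applying \Cref{symbol-length-mixed-multi}(1) then yields the third $F$-quaternion, and altogether $[A]=[H_1]+[H_2]+[H_3]$.

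The main obstacle will be the bookkeeping in this last step: one must identify $\eta$ precisely enough that the two kernel quaternions are pinned down, and ensure that the Witt-theoretic descent provided by excellence interacts cleanly with the Brauer-class descent needed for the quaternion slot. The technical tools for this mirror those used in the proof of \Cref{descent-quaternions-ins-quad}, in particular Frobenius reciprocity and the transfer diagram of \Cref{comm-diag-e2-s-Frob-quadext}, now applied in the mixed (rather than purely inseparable) quadratic setting. The nonexcellent examples produced in \cite{ELTW83,LD24} and in \Cref{example} below correspond precisely to the failure of this descent, so the excellence hypothesis is essential and the bound $\lambda_2(A)\leq 3$ is sharp for the purposes of the proposition.
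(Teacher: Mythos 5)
Your overall strategy---reduce to showing that the quaternion algebra $Q=C_A(L)$ admits a symbol presentation $[a',b')_L$ with $b'\in F^{\times}$ and then invoke \Cref{symbol-length-mixed-multi}---is reasonable in outline, but the step where excellence is actually used is not carried out, and it is precisely the hard part. You assert that $\ker(\Br_2(F)\to\Br_2(L))$ is generated by the obvious quaternion classes, and that from the descended form $\eta$ one can ``extract two $F$-quaternion classes whose combined effect is to bring $[Q]$ into the form $[a',b')_L$ with $b'\in F^{\times}$.'' Neither claim is proved. The first is exactly the kind of decomposability statement for relative Brauer groups of biquadratic extensions whose failure is the theme of \cite{ELTW83}; moreover it concerns $F$-algebras split by $L$, which is not the same as a descent statement for a slot of the $L$-symbol representing $[A_L]$. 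The second is deferred to ``bookkeeping'' with no indication of how the Witt-theoretic information coming from excellence is converted into the required Brauer-theoretic conclusion. The purely inseparable case is likewise only gestured at. As it stands the argument has a genuine gap at its core.

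For comparison, the paper routes the descent through the inseparable quadratic subextension $K=F(\sqrt b)$ rather than through $L$ itself: it writes $A_K$ as a biquaternion algebra, builds an explicit $8$-dimensional form $\varphi$ over $K$ with $e_2(\varphi)=[A_K]$ whose transfer $s_\ast(\varphi)$ is both isotropic and of trivial Clifford invariant (via \Cref{comm-diag-e2-s-Frob-quadext}), hence hyperbolic by the Hauptsatz, and then descends $\varphi$ to an $F$-form $\psi'$ using \Cref{exact-r-s-I_q and I^2_q-quadext}. Excellence enters only at that point, applied to the quadratic form $\psi'$ (whose anisotropic part over the quartic field $M$ has dimension at most $4$) to produce a $4$-dimensional $F$-form $\psi$ with $e_2(\psi)=[H]$ and $[H_M]=[A_M]$; then $A\otimes_F H$ is split by the quartic extension $M$ and is a biquaternion class. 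To salvage your approach you would need to actually prove the descent of the inseparable slot of $Q$ from excellence, which is essentially what this chain of lemmas accomplishes by a different route.
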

\begin{proof}
    Let $M/F$ be an excellent quartic extension of $F$ contained in $A$ as in the statement, i.e. $M=F(\sqrt[4]{b})$ or $M=F(\alpha,\sqrt{b})$ where $\alpha^{2}-\alpha=a'\in F$ and $b\in F^{\times}$. Set $K=F(\sqrt{b})$. 
    Then $A_K\sim[a,x)_K\otimes_K[c,y)_K$ where $c\in F$, $x\in K^{\times}$, and $y=\sqrt{b}$, $a\in F$ when $M=F(\sqrt[4]{b})$ and $y\in K^{\times}$, $a=a'$ otherwise. 
    Let $s:K\to F$ be the $F$-linear functional with $s(1)=0$ and $s(\sqrt{b})=1$. 
    We choose $\lambda \in F$ such that $s_\ast(\langle x\rangle_{\mathfrak{b}})\perp \lambda s_\ast(\langle y\rangle_{\mathfrak{b}})$ is isotropic. 
    We set $\phi=\llangle x,a]]\perp\lambda\llangle y,c]]$ and observe that $e_2(\varphi)=[A_K]$. 
    Using Frobenius reciprocity, we compute that the Witt class of $s_\ast(\varphi)$ is given by $s_\ast(\langle x\rangle_{\mathfrak{b}})\otimes [1,a]\perp \lambda s_\ast(\langle y\rangle_{\mathfrak{b}})\otimes [1,c]$, which is isotropic,  
    since $s_\ast(\langle x\rangle_b)\perp \lambda s_\ast(\langle y\rangle_b)$ is isotropic.
    Therefore $\dim (s_\ast(\varphi))_{an}<8$.
    Now Lemma \ref{comm-diag-e2-s-Frob-quadext} implies that $$e_2(s_\ast(\varphi))=\Frob_{K/F}(e_2(\phi))=\Frob_{K/F}A_K$$
    which is trivial by \cite[Proposition 3.2]{B24}.
    Then, the Witt class of $s_\ast(\varphi)$ lies in $I^3_qF$.  
    As $\dim (s_\ast(\varphi))_{an}<8$, the Hauptsatz of Arason-Pfister implies that $s_\ast(\varphi)$ is hyperbolic. 
    Hence by \Cref{exact-r-s-I_q and I^2_q-quadext}, $\varphi\simeq \psi'_K$ for some nonsingular quadratic form $\psi'$ over $F$ with trivial Arf invariant. 
    Note that $\dim(\psi'_{M})_{an}=\dim(\varphi_{M})_{an}\leq4$. 
    Since $M/F$ is excellent, we have $(\psi'_M)_{an}\simeq \psi_M$ for some nonsingular quadratic form $\psi$ over $F$. 
    In particular $\dim\psi\leq4$. It is easy to see that we can choose $\psi$ with trivial Arf invariant. 
    Then $e_2(\psi)=[H]$ for some $F$-quaternion algebra $H$. 
    We have $$[H_M]=e_2(\psi_M)=e_2(\psi'_M)=e_2(\varphi_M)=[A_M].$$  
    It follows that $A\otimes_FH$ splits over $M$. 
    This implies that $A\otimes_FH$ is Brauer equivalent to a tensor product of $2$ $F$-quaternion algebras. 
    We deduce that $A$ decomposes into a tensor product of $3$ $F$-quaternion algebras.
\end{proof}

\begin{rem}
    Let $D$ be a cyclic $F$-division algebra of exponent $2$ and degree $8$. By \cite[Theorem 7.27]{Alb39}, $D$ contains a simple purely inseparable quartic field extension of $F$.
    
     Let further $K/F$ be an inseparable quadratic field extension $K/F$ contained in $D$. Then, the centralizer $C_D(K)$ of $K$ in $D$ is a central simple $K$-algebra that has exponent $2$ and degree $4$. 
    In particular $C_D(K)$ contains a separable quadratic field extension $L/K$, see \cite[Theorem 11.9]{Alb39}. Write $L=K(\alpha)$ with $\alpha\in L\setminus K$ and $\alpha^2-\alpha\in K$. 
    Then $L'=F(\alpha^2)$ is a separable quadratic field extension of $F$ and we have $L=KL'$. 
    In particular $L/F$ is a mixed biquadratic inseparable field extension contained in $D$.
\end{rem}

\begin{exmp}\label{example}
    Let $C$ be a cyclic algebra over a field $k$ of characteristic $2$ (e.g. a global field of characteristic $2$) of degree and exponent equal to $8$.
    Let $K$ be the function field of the Severi-Brauer variety corresponding to $C\otimes_kC$ and set $D(8,2)=C\otimes_kK$. 
    Then $D(8,2)$ is a cyclic division algebra of degree $8$ and exponent $2$ over $K$, see \cite{SchBerg92}. 
    In \cite{Karp95}, it is shown that $D(8,2)$ does not decompose into a tensor product of $3$ quaternion algebras. 
    Hence by \Cref{mixed-biqud-non-decom-cyclic}, any mixed biquadratic and simple purely inseparable quartic field extension contained in $D(8,2)$ is not excellent.  
\end{exmp}

\medskip

\subsection*{Acknowledgement}
This work was done during some (online) meetings between the three authors when the first author was visiting the Laboratoire de Math\'ematiques de Lens in november 2023 in the framework of the project \emph{IEA of CNRS} between the Artois University and the Academic College of Tel-Aviv-Yaffo. 
The three authors are grateful for the support of both institutions and CNRS. 
The first author is supported by the 2020 PRIN (project \emph{Derived and underived algebraic stacks and applications}) from MIUR, and by research funds from Scuola Normale Superiore.

\end{document}